\documentclass[12pt]{article}
\usepackage{amssymb, amsmath, amsthm, amsfonts}
\usepackage{newtxtext,newtxmath}

\textheight 230mm \textwidth 150mm
\voffset -1cm \hoffset -0.5cm

\newtheorem{theorem}{Theorem}
\newtheorem{lemma}{Lemma}
\DeclareRobustCommand{\rchi}{{\mathpalette\irchi\relax}}
\newcommand{\irchi}[2]{\raisebox{\depth}{$#1\chi$}}

\begin{document}

\centerline{\large\bf True-pairs of Real Linear Operators and Factorization of Real Polynomials}

\vspace{6pt}
\begin{center}
Arindama Singh \\
Department of Mathematics \\
Indian Institute of Technology Madras \\
Chennai-6000036, India \\
Email: asingh@iitm.ac.in
\end{center}

\noindent {\bf Abstract}: A linear operator on a finite dimensional nonzero real vector space may not have an eigenvalue. However, corresponding to each such operator $T$, there exist a pair of real numbers $(\alpha,\beta)$ and a nonzero vector $v$ such that $[(T-\alpha I)^2+\beta^2 I](v)=0$. This is usually proved by using the Fundamental theorem of algebra and Cayley-Hamilton theorem. We construct an inductive proof of this fact without using the Fundamental theorem of Algebra. From this we deduce that a polynomial with real coefficients can be written as a product of linear factors and quadratic factors with negative discriminant. It thus gives a proof of the latter fact about polynomials with real coefficients, which does not use complex numbers.

\vspace{6pt}
\noindent {\bf Keywords}: Linear operators, Eigenvalues, True-pair, Real polynomial, Factorization.

\vspace{6pt}
\noindent {\bf MSC}: 00A05, 15A06

\section{Introduction}
Let $V$ be a real vector space of dimension $n\geq 1$. Let $T:V\to V$ be a linear operator. Recall that a real number $\lambda$ is called an eigenvalue of $T$ if there exists a nonzero vector $u\in V$ such that $Tu=\lambda u$. In this case, any such vector $u$ is called an eigenvector. The polynomial $\rchi_T(t):={\rm det}(t I-T)$ is called the characteristic polynomial of $T$, and it is a monic polynomial of degree $n$ with real coefficients. Eigenvalues of $T$ are precisely the real zeroes of $\rchi_T(t)$. Thus, there are linear operators on real vector spaces having no eigenvalues. For instance, the linear operator $T:\mathbb{R}^2\to\mathbb{R}^2$ given by $T(a,b)=(-b,a)$ does not have an eigenvalue. However, we see that $(T^2+I)(a,b)=0$ for any $(a,b)\in\mathbb{R}^2$.

Due to the fundamental theorem of algebra, $\rchi_T(t)$ has $n$ number of complex zeroes counting multiplicities. Since $\rchi_T(t)$ has real coefficients, all its complex zeroes occur in conjugate pairs. That is, it can be written in the form
$$\rchi_T(t)=(t-a_1)\cdots (t-a_m) (t-b_1)(t-\overline{b}_1)\cdots (t-b_k)(t-\overline{b}_k)$$
where $a_i$ are real numbers and $b_j$ are complex numbers with nonzero imaginary parts. Further, if $\rchi_T(t)$ has no real zeroes, then $a_1,\ldots,a_m$ are absent in the above product. Writing each $b_j=\alpha_j+i\beta_j$ for real numbers $\alpha_j,\beta_j$ with $\beta_j\neq 0$, we see that $(t-b_j)(t-\overline{b}_j)=(t-\alpha_j)^2+\beta_j^2$. Thus,
\begin{equation}\label{facto}
\rchi_T(t)=(t-a_1)\cdots (t-a_m) \big((t-\alpha_1)^2+\beta_1^2\big)\cdots \big((t-\alpha_k)^2+\beta_k^2\big).
\end{equation}
Due to Cayley-Hamilton theorem, $T$ satisfies its characteristic polynomial. That is,
$$(T-a_1 I)\cdots (T-a_m I) \big((T-\alpha_1 I)^2+\beta_1^2 I\big)\cdots \big((T-\alpha_k I)^2+\beta_k^2 I\big)=0.$$
Multiplying $(T-a_1 I)\cdots (T-a_m I)$ with this and rewriting $(T-a_i I)^2$ as $(T-\alpha_i I)^2+0^2 I$, we have
$$\big((T-a_1 I)^2+0^2I\big)\cdots \big((T-a_m I)^2+0^2 I\big) \big((T-\alpha_1 I)^2+\beta_1^2 I\big)\cdots \big((T-\alpha_k I)^2+\beta_k^2 I\big)=0.$$
Let $v_k$ be a nonzero vector. Write $v_{k-1}=:\big((T-\alpha_k I)^2+\beta_k^2 I\big)(v_k)$. From the above equation it follows that either $v_{k-1}=0$ or
$$\big((T-a_1 I)^2+0^2 I\big)\cdots \big((T-a_m I)^2+0^2 I\big) \big((T-\alpha_1 I)^2+\beta_1^2 I\big)\cdots \big((T-\alpha_{k-1} I)^2+\beta_{k-1}^2 I\big)(v_{k-1})=0.$$
Proceeding inductively we see that there exist $\alpha,\beta\in\mathbb{R}$ and a nozero vector $v\in V$ such that
$$[(T-\alpha I)^2+\beta^2 I](v)=0.$$
We wish to construct a proof of this fact which does not use the complex numbers, the fundamental theorem of algebra, and/or Cayley-Hamilton theorem. For this purpose, we use the following facts, whose proofs do not depend upon complex numbers, the fundamental theorem of algebra or Cayley-Hamilton theorem. By a real polynomial we mean a polynomial in a single variable $t$ with real coefficients.

\begin{description}
\item[Fact 1:] Any real polynomial of odd degree has a real zero.
\item[Fact 2:] Any real polynomial of degree four can be expressed as a product of two real polynomials of degree two each.
\item[Fact 3:] For each monic real polynomial $p(t)$ of degree $n$, there exists a matrix $A$ of order $n$, called the companion matrix of $p(t)$, such that the characteristic polynomial ${\rm det}(tI-A)$ of $A$ is $p(t)$.
\end{description}

Fact 1 follows from the Intermediate value property of continuous real valued functions. There are many methods to express a real polynomial of degree four as a product of two quadratic factors mentioned in Fact 2, which do not use complex numbers. For two such methods by Ferrari and Descartes, see  articles 12-14 of Chapter XII in \cite{bern}. Fact 3 is discussed in almost all standard books on Linear Algebra; for instance, see Sect. 5.2 in \cite{nair}.

Later, we will deduce the existence of a factorization of real polynomials in form (\ref{facto}) by using the division algorithm and Cayley-Hamilton theorem.

\section{Preliminary results}
In this section, we introduce some terminology and prove some results, which will lead to our main result.

Let $V$ be a finite dimensional nonzero real vector space. Let $T:V\to V$ be a linear operator on $V$. We say that $T$ {\em has a true-pair vector} if there exist real numbers $\alpha,\beta$ and a nonzero vector $v\in V$ such that
$$[(T-\alpha I)^2+\beta^2 I](v)=0.$$
In such a case, we say that the pair of real numbers $(\alpha,\beta)$ is a {\em true-pair} of $T$ and $v$ is an associated {\em true-pair vector} of $T$.

If $T$ has an eigenvalue $\lambda$ with an associated eigenvector $u$, then $(T-\lambda I)(u)=0$. It implies that $[(T-\lambda I)^2+0^2 I](u)=0$; that is, $T$ has a true-pair vector, namely $u$.

We write
\begin{eqnarray*}
N(T) &=& \{x\in V: T(x)=0\},~{\rm the~null~space~of~} T\,; \\
R(T) &=& \{T(x): x\in V\},~{\rm the~range~space~of~} T.
\end{eqnarray*}

We say that two linear operators $S$ and $T$ on $V$ have a {\em common true-pair vector} if there exists a nonzero vector $v\in V$ and real numbers $\alpha,\beta,\gamma,\delta$ such that
$$[(S-\alpha I)^2+\beta^2 I](v)=0\quad\mbox{and}\quad [(T-\gamma I)^2+\delta^2 I](v)=0.$$
In such a case, $v$ is  said to be a {\em common true-pair vector} of $S$ and $T$.

Further, two linear operators $S$ and $T$ on $V$ are said to be {\em commuting operators} if $S(T(x))=T(S(x))$ for each $x\in V$.

\begin{lemma}\label{lem3}
Let $S$ and $T$ be two commuting operators on a finite dimensional nonzero real vector space $V$. Let $(\alpha,\beta)$ be a true-pair of $S$. Then the restrictions of $S$ and $T$ to $N\big((S-\alpha I)^2+\beta^2 I\big)$ are commuting operators, and the restrictions of $S$ and $T$ to $R\big((S-\alpha I)^2+\beta^2 I\big)$ are commuting operators.
\end{lemma}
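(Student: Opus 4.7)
The plan is to observe that the whole statement reduces, after unpacking, to two facts: (i) both $N(P)$ and $R(P)$ are invariant under $S$ and under $T$, where $P := (S-\alpha I)^2 + \beta^2 I$; and (ii) commutativity of the restrictions is inherited from commutativity on $V$ once invariance is known. So the work is really in step (i).

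First I would show that $P$ commutes with both $S$ and $T$. Since $S$ commutes with itself and with $I$, it commutes with $S-\alpha I$, hence with $(S-\alpha I)^2$, and of course with $\beta^2 I$, so $SP = PS$. For $T$, the hypothesis $ST = TS$ gives $T(S-\alpha I) = (S-\alpha I)T$; squaring (i.e.\ applying this twice) yields $T(S-\alpha I)^2 = (S-\alpha I)^2 T$, and clearly $T$ commutes with $\beta^2 I$, so $TP = PT$.

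Next I would use these commutation relations to show invariance. If $v \in N(P)$ then $P(S v) = S(Pv) = 0$ and $P(Tv) = T(Pv) = 0$, so $Sv, Tv \in N(P)$; thus $S$ and $T$ restrict to operators on $N(P)$. Likewise, if $w = P(u) \in R(P)$, then $S w = S P u = P(Su) \in R(P)$ and $Tw = TPu = P(Tu) \in R(P)$, so $S$ and $T$ restrict to operators on $R(P)$.

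Finally, for any $v$ in $N(P)$ (resp.\ $R(P)$), the hypothesis $S(Tv) = T(Sv)$ holds in $V$, and since both sides lie in the invariant subspace by the previous step, the same identity holds for the restrictions. Hence the restricted operators commute. The only mild subtlety — which I would flag but not belabor — is that if $R(P) = \{0\}$, the second claim is vacuous; the first claim is substantive because the existence of a true-pair vector guarantees $N(P) \ne \{0\}$. I do not anticipate any real obstacle; the argument is a bookkeeping exercise built entirely on the fact that polynomials in $S$ commute with any operator that commutes with $S$.
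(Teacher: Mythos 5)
Your proposal is correct and follows essentially the same route as the paper: show that $P=(S-\alpha I)^2+\beta^2 I$ commutes with $S$ and $T$ (the paper uses this implicitly), deduce that $N(P)$ and $R(P)$ are invariant under both operators, and note that commutativity is inherited by restrictions to invariant subspaces. Your explicit justification of $TP=PT$ and the remark about $R(P)$ possibly being trivial are minor refinements of the same argument.
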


\begin{proof}
%
%
%
Since $(S-\alpha I)^2+\beta^2 I$ is a linear operator on $V$, write
$${\cal N} = N\big((S-\alpha I)^2+\beta^2 I\big),\quad {\cal R} = R\big((S-\alpha I)^2+\beta^2 I\big).$$
These are subspaces of $V$. Let $x\in {\cal N}$. Then $[(S-\alpha I)^2+\beta^2 I](x)=0$. Now,
\begin{eqnarray*}
[(S-\alpha I)^2+\beta^2 I]S(x) &=& S[(S-\alpha I)^2+\beta^2 I](x)=S(0)=0, \\ ~
[(S-\alpha I)^2+\beta^2 I]T(x) &= & T[(S-\alpha I)^2+\beta^2 I](x)=T(0)=0.
\end{eqnarray*}
That is, $S(x)\in {\cal N}$ and $T(x)\in {\cal N}$. Hence, ${\cal N}$ is invariant under both $S$ and $T$.

Next, let $y\in {\cal R}$. There exists $x\in V$ such that $y=[(S-\alpha  I)^2+\beta^2 I](x)$. Then,
\begin{eqnarray*}
S(y) &=& S[(S-\alpha I)^2+\beta^2 I](x)=[(S-\alpha I)^2+\beta^2 I]S(x)\in {\cal R}, \\
T(y) &=& T[(S-\alpha I)^2+\beta^2 I](x) = [(S-\alpha I)^2+\beta^2 I]T(x)\in {\cal R}.
\end{eqnarray*}
That is, ${\cal R}$ is invariant under both $S$ and $T$.

Now that the subspaces ${\cal N}$ and ${\cal R}$ of $V$ are invariant under both $S$ and $T$, the conclusions follow.
 \end{proof}

In \cite{derk}, it has been shown that a finite number of commuting operators on a finite dimensional nonzero complex vector space have a common eigenvector. We apply a similar technique for obtaining information about true-pair vectors without using complex numbers.

\begin{lemma}\label{lem2}
Let $k\in\mathbb{N} $. Let $V$ be a finite dimensional nonzero real vector space, where $2^k$ does not divide $\dim(V)$.   If each linear operator on $V$ has a true-pair vector, then any two commuting operators on $V$ have a common true-pair vector.
\end{lemma}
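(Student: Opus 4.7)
The plan is to induct on $\dim V$ with $k$ held fixed, using Lemma~\ref{lem3} as the tool that converts a true-pair of $S$ into invariant subspaces shared with $T$. First I would apply the hypothesis to $S$ to obtain a true-pair $(\alpha,\beta)$ and form the subspaces $\mathcal{N}=N((S-\alpha I)^2+\beta^2 I)$ and $\mathcal{R}=R((S-\alpha I)^2+\beta^2 I)$. Lemma~\ref{lem3} gives that both $\mathcal{N}$ and $\mathcal{R}$ are invariant under $S$ and $T$, so $S|_\mathcal{N},T|_\mathcal{N}$ (and similarly on $\mathcal{R}$) are commuting pairs on strictly smaller spaces whenever $\mathcal{N}\neq V$. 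The role of the arithmetic hypothesis $2^k\nmid\dim V$ emerges exactly here: since $\dim\mathcal{N}+\dim\mathcal{R}=\dim V$ and $2^k\nmid\dim V$, it cannot be that $2^k$ divides both summands, so at least one of $\dim\mathcal{N},\dim\mathcal{R}$ also escapes divisibility by $2^k$, and the induction can proceed on whichever of the two inherits this property.

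For the base case $\dim V=1$, any operator is a scalar and any nonzero vector is a common true-pair vector. For the inductive step I would split into two cases. If $\mathcal{N}=V$, then I would invoke the hypothesis for $T$ and note that $T$'s true-pair vector automatically lies in $V=\mathcal{N}$, so it is a common true-pair vector of $S$ and $T$. Otherwise both $\mathcal{N}$ and $\mathcal{R}$ are nontrivial proper invariant subspaces summing to $\dim V$, so one of them---call it $\mathcal{W}$---satisfies $2^k\nmid\dim\mathcal{W}$; applying the inductive hypothesis to the commuting pair $(S|_\mathcal{W},T|_\mathcal{W})$ on $\mathcal{W}$ would produce a common true-pair vector there, which is also a common true-pair vector of $S$ and $T$ on $V$.

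The step I expect to be most delicate is verifying the precondition of the inductive hypothesis on $\mathcal{W}$: every linear operator on $\mathcal{W}$ must have a true-pair vector. The lemma's assumption is that this holds on $V$ itself, and a naive ``extend by zero on a complement'' argument does not transfer the property to $\mathcal{W}$, because the extended operator may acquire its true-pair vector off $\mathcal{W}$. I therefore expect this lemma to be used within a coupled induction in the main theorem, where the corresponding ``every operator has a true-pair vector'' property for lower-dimensional spaces is already in hand from an outer induction on $\dim V$. Threading that assumption carefully at each recursive call is the key subtlety I would have to manage when writing the proof out in full.
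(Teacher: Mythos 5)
Your proposal follows the paper's proof essentially step for step: the same induction on $\dim V$ with $k$ fixed, the same disposal of the case $\mathcal{N}=V$ by using $T$'s own true-pair vector, and the same rank--nullity observation that $2^k$ cannot divide both $\dim\mathcal{N}$ and $\dim\mathcal{R}$ when it does not divide $\dim V$. The delicate point you flag is genuine --- the paper's inductive hypothesis is likewise stated as a conditional and the antecedent (``every operator on $\mathcal{N}$, resp.\ $\mathcal{R}$, has a true-pair vector'') is never explicitly verified there either; as you anticipate, the intended reading is that the true-pair hypothesis is really available universally for all spaces of the relevant smaller dimensions (supplied by the outer induction on evenness when the lemma is invoked in Lemma~\ref{lem4}), so your plan to thread that assumption through a coupled induction is exactly the right repair.
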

\begin{proof}
We use induction on the dimension of $V$. Suppose $\dim(V)=1$. Then, $k=1$. Let $\{v\}$ be a basis for $V$. Suppose $S$ and $T$ are linear operators on $V$. Then, $S(v)\in V$ implies that there exists $\alpha\in\mathbb{R}$ such that $S(v)=\alpha v$. Similarly, $T(v)=\gamma v$ for some $\gamma\in\mathbb{R}$. Now,
$$[(S-\alpha I)^2+0\cdot I](v)=0=[(T-\gamma I)^2+0\cdot I](v).$$
So, $v$ is a common true-pair vector of $S$ and $T$.

Assume the induction hypothesis that if $U$ is a nonzero real vector space of dimension less than $n$ and $2^k$ does not divide $\dim(U)$, then each linear operator on $U$ has a true-pair vector implies that any two commuting operators on $U$ have a true-pair vector. Let $V$ be a nonzero real vector space of dimension $n$, where $2^k$ does not divide $n$. Assume that each linear operator on $V$ has  a true-pair vector. Let $S$ and $T$ be two commuting operators on $V$. We need to show that $S$ and $T$ have a common true-pair vector.

Now that $S$ and $T$ have true-pair vectors, there exist $\alpha,\beta,\gamma,\delta\in\mathbb{R}$ and nonzero vectors $u,v\in V$ such that
$$[(S-\alpha I)^2+\beta^2 I](u)=0,\quad [(T-\gamma I)^2+\delta^2 I](v)=0.$$

Write ${\cal N}=N\big((S-\alpha I)^2+\beta^2 I\big)$. Since $u$ is a nonzero vector in ${\cal N}$, $\dim {\cal N}\geq 1$. Further, for each $x\in {\cal N}$, $[(S-\alpha I)^2+\beta^2 I](x)=0$.

If ${\cal N}=V$, then $v\in {\cal N}$ so that $[(S-\alpha I)^2+\beta^2 I](v)=0$. Now, $S$ and $T$ have a common true-pair vector, namely, $v$.

So, let ${\cal N}$ be  a proper subspace of $V$; that is, $\dim({\cal N})<n$. By Lemma~\ref{lem3}, the restrictions of $S$ and $T$ to ${\cal N}$ are commuting operators.

If $2^k$ does not divide $\dim({\cal N})$, then by the induction hypothesis, the restriction operators of $S$ and $T$ to ${\cal N}$ have a true-pair vector $y\in {\cal N}$. Then, $y$ is a common true-pair vector of $S$ and $T$ as linear operators on $V$.

So, assume that $2^k$ divides $\dim({\cal N})$. Write ${\cal R}=R\big((S-\alpha I)^2+\beta^2 I\big)$. By Lemma~\ref{lem3}, the restrictions of $S$ and $T$ to ${\cal R}$ are commuting operators. Due to the Rank-nullity theorem,  $\dim({\cal N})+\dim({\cal R})=n$. As $2^k$ divides $\dim({\cal N})$ and $2^k$ does not divide $n$, it follows that $2^k$ does not divide $\dim({\cal R})$. As $\dim({\cal R})<n$, by the induction hypothesis, the restrictions of $S$ and $T$ to ${\cal R}$ have a common true-pair vector $y$. Then, $y$ is a common true-pair vector of $S$ and $T$ as linear operators on $V$.
\end{proof}

We slightly enlarge our vocabulary.  Let $m\in\mathbb{N} \cup\{0\}$. By an {\em operator of evenness} $m$, we mean a linear operator on a finite dimensional nonzero real vector space $V$, where the maximum power of $2$ that divides $\dim(V)$ is $2^m$.   Notice that an  operator of evenness $0$ is simply a linear operator on an odd dimensional real vector space.

\begin{lemma}\label{lem4}
Let $m\in\mathbb{N} \cup\{0\}$. If each operator of evenness $m$ has a true-pair vector, then each operator of evenness $m+1$ has a true-pair vector.
\end{lemma}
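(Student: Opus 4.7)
The plan is to apply the hypothesis together with Lemma~\ref{lem2} to a commuting pair of operators on the space of real symmetric matrices, and then extract a true-pair vector of $T$ from a common true-pair vector of that pair, by producing an explicit real polynomial in $T$ which annihilates a nonzero vector.

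Let $T$ be an operator of evenness $m+1$ on $V$ and choose a basis identifying $V$ with $\mathbb{R}^n$, where $n = \dim V = 2^{m+1} q$ with $q$ odd. Let $W$ be the space of real symmetric $n \times n$ matrices; its dimension $n(n+1)/2 = 2^m q(n+1)$ satisfies $2^{m+1} \nmid \dim W$ (because $n+1$ and $q$ are odd), so by hypothesis every linear operator on $W$ has a true-pair vector. Define $\Phi_1(A) := TA + AT^{\top}$ and $\Phi_2(A) := TAT^{\top}$ on $W$: both preserve symmetry, and a direct check shows $\Phi_1 \Phi_2 = \Phi_2 \Phi_1$. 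By Lemma~\ref{lem2} applied with $k = m+1$, the pair $\Phi_1, \Phi_2$ admits a common true-pair vector $C \in W$, $C \neq 0$; that is, there exist $\alpha, \beta, \gamma, \delta \in \mathbb{R}$ with $[(\Phi_1 - \alpha I)^2 + \beta^2 I](C) = 0$ and $[(\Phi_2 - \gamma I)^2 + \delta^2 I](C) = 0$.

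Writing these as matrix identities via the commuting operators $L \colon A \mapsto TA$ and $R \colon A \mapsto AT^{\top}$ on $n \times n$ matrices, they take the form $F(L,R)C = 0$ and $G(L,R)C = 0$, where $F(x,y) = (x+y-\alpha)^2 + \beta^2$ and $G(x,y) = (xy-\gamma)^2 + \delta^2$. The resultant $P(x) := \mathrm{Res}_y(F,G)$ lies in the ideal $(F,G) \subseteq \mathbb{R}[x,y]$, so $P(T)C = 0$ as an $n \times n$ matrix identity. The crucial identity, verifiable by direct expansion over $\mathbb{R}$, is
\[
P(x) = Q(x)^2 - (2\beta\delta\, x)^2, \qquad Q(x) := (x^2 - \alpha x + \gamma)^2 + \beta^2 x^2 + \delta^2,
\]
so as a difference of squares $P = P_1 P_2$, with $P_1(x) = (x^2 - \alpha x + \gamma)^2 + (\beta x - \delta)^2$ and $P_2(x) = (x^2 - \alpha x + \gamma)^2 + (\beta x + \delta)^2$, both real polynomials of degree $4$.

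Now pick any $v \in \mathbb{R}^n$ with $Cv \neq 0$; then $P_1(T) P_2(T)(Cv) = 0$. By Fact~2 each $P_i$ further splits as a product of two real quadratics, so $Cv$ is annihilated by a product of four real quadratic polynomials in $T$; applying the four factors successively to $Cv$, the first point at which the image becomes zero produces a nonzero vector $w$ killed by a single real quadratic $T^2 - aT + bI$. If $a^2 - 4b \geq 0$ this quadratic splits over $\mathbb{R}$ and one more application of the same peeling trick yields a nonzero eigenvector of $T$, hence a true-pair vector with $\beta' = 0$; if $a^2 - 4b < 0$, then $b - a^2/4 > 0$ and $w$ is itself a true-pair vector with pair $(a/2, \sqrt{b - a^2/4})$. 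The principal obstacle is securing the degree-$4$ factorization of $P$ without complex numbers: the raw resultant has degree $8$ in $T$, to which Fact~2 does not apply directly, and the identity $P = Q^2 - (2\beta\delta x)^2$ is precisely what reduces it to two real quartics before Fact~2 can be invoked.
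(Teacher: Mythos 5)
Your argument is correct, and its overall architecture is the same as the paper's: pass to the space of symmetric matrices (whose dimension kills one factor of $2$), form the commuting pair $X\mapsto AX+XA^t$ and $X\mapsto AXA^t$, invoke Lemma~\ref{lem2} to get a common true-pair vector, reduce to a real quartic in $A$ annihilating a nonzero column vector, and finish with Fact~2 plus the peeling/discriminant argument. Where you genuinely diverge is the middle step. The paper works directly with the two operator identities: it writes $\delta^2(L_1-\alpha I)^2(B)=\beta^2(L_2-\gamma I)^2(B)$, factors this as a difference of squares of commuting operators, and then splits into two cases according to which factor kills its argument; Case~1 requires the somewhat delicate computation of left-multiplying by $(\delta I-\beta A)^2$ and repeatedly substituting $(\delta I-\beta A)BA^t=\big(-\delta A+(\alpha\delta-\beta\gamma)I\big)B$ to arrive at $\big((A^2-\alpha A+\gamma I)^2+(\beta A-\delta I)^2\big)B=0$, and Case~2 reduces to Case~1 with $\beta$ replaced by $-\beta$. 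You instead eliminate the right-multiplication operator wholesale via the resultant in $y$ of $F=(x+y-\alpha)^2+\beta^2$ and $G=(xy-\gamma)^2+\delta^2$, obtaining a single degree-$8$ identity $P(T)C=0$, and your difference-of-squares factorization $P=P_1P_2$ produces exactly the paper's two quartics, $(x^2-\alpha x+\gamma)^2+(\beta x\mp\delta)^2$, in one stroke --- so your elimination explains structurally where the paper's Case~1 and Case~2 polynomials come from, and it avoids the case split and the $(\delta I-\beta A)^2$ manipulation entirely. The costs are that you must justify $\mathrm{Res}_y(F,G)\in(F,G)$ (standard via the Sylvester matrix, and importantly free of complex numbers, but worth stating, since the whole point of the paper is to avoid $\mathbb{C}$ --- in particular you should verify the identity $P=Q^2-(2\beta\delta x)^2$ by a Sylvester-determinant or direct expansion rather than by evaluating $G$ at the complex roots of $F$) and that you must check the degree-$8$ identity by hand; the paper's computation, while messier, stays within elementary operator algebra throughout. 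Both are complete proofs.
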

\begin{proof}
Assume that each operator of evenness $m$ has a true-pair vector. Let $T$ be an operator of evenness $m+1$. That is,  $T:V\to V$ is a linear operator, where $V$ is a finite dimensional nonzero real vector space with $\dim(V)=n$ and $2^{m+1}$ is the maximum power of $2$ that divides $n$. Fix an ordered basis for $V$. Let $A$ be the matrix representation of $T$ with respect to this ordered basis. Then, $A$ is an $n\times n$ matrix with real entries.

Now, $A=PTP^{-1}$, where $P$ is the canonical basis isomorphism from $V$ to $\mathbb{R}^{n\times 1}$. Then, for any $\alpha,\beta\in\mathbb{R}$,
$$
P[(T-\alpha I)^2+\beta^2 I]P^{-1} = P(T-\alpha I)P^{-1}P(T-\alpha I)P^{-1}+\beta^2 I \\
= (A-\alpha I)^2+\beta^2 I.
$$
Thus, if $u$ is a true-pair vector of $A$, then there exist a pair of real numbers $(\alpha,\beta)$ such that  $\big((A-\alpha I)^2+\beta^2 I\big)u=0$. It gives $P[(T-\alpha I)^2+\beta^2 I]\big(P^{-1}(u)\big)=0$, which implies
$$[(T-\alpha I)^2+\beta^2 I]\big(P^{-1}(u)\big)=0.$$
That is, $P^{-1}(u)$ is a true-pair vector of $T$. Hence, it is enough to prove that $A$ has a true-pair vector.

Consider $S_n=\{X\in\mathbb{R}^{n\times n}: X^t=X\}$, the set of all real symmetric matrices of order $n$. Then, $S_n$ is a real vector space of dimension $n(n+1)/2$. Define
$$L_1(X)=AX+XA^t,\quad L_2(X)=AXA^t\quad\mbox{for~} X\in S_n.$$
For $X,Y\in S_n$ and $b\in\mathbb{R}$, we have
\begin{eqnarray*}
\big(L_1(X)\big)^t&=&(AX+XA^t)^t=X^tA^t+AX^t=XA^t+AX=L_1(X), \\
\big(L_2(X)\big)^t&=&(AXA^t)^t=AX^tA^t=AXA^t=L_2(X), \\
L_1(bX+Y)&=& A(bX+Y)A^t= bAXA^t+AYA^t=bL_1(X)+L_1(Y), \\
L_2(bX+Y)&=& A(bX+Y)A^t = bAXA^t+AYA^t=bL_2(X)+L_2(Y), \\
L_1(L_2(X))&=&L_1(AXA^t)=A(AXA^t)+(AXA^t)A^t=A^2XA^t+AX(A^t)^2, \\
L_2(L_1(X))&=& L_2(AX+XA^t) = A(AX+XA^t)A^t=A^2XA^t+AX(A^t)^2.
\end{eqnarray*}
So, $L_1$ and $L_2$ are commuting operators on $S_n$.

Notice that  $n=2^{m+1} k$ for an odd integer $k$, and $m\geq 0$. Thus, $n+1$ is odd so that $n(n+1)/2=2^{m}(n+1)k$, where $(n+1)k$ is an odd integer. Then, every linear operator on $S_n$ is of evenness $m$. By assumption, every linear operator on $S_n$ has a true-pair vector.  Due to Lemma~\ref{lem2}, $L_1$ and $L_2$ have a common true-pair vector. So, let $\alpha,\beta,\gamma,\delta\in\mathbb{R}$ and $B\in S_n$ be a nonzero matrix such that
$$[(L_1-\alpha I)^2+\beta^2 I](B)=0,\quad [(L_2-\gamma I)^2+\delta^2 I](B)=0.$$
Then, $-\beta^2\delta^2B=\delta^2(L_1-\alpha I)^2(B)=\beta^2(L_2-\gamma I)^2(B)$. It gives
\begin{equation}\label{eq1}
[\delta(L_1-\alpha I)+\beta(L_2-\gamma I)][\delta(L_1-\alpha I)-\beta(L_2-\gamma I)](B)=0.
\end{equation}

\noindent{\em Case 1:} Suppose  $[\delta(L_1-\alpha I)-\beta(L_2-\gamma I)](B)=0$. Then,
$$\delta(AB+BA^t-\alpha B)-\beta(ABA^t-\gamma B)=0.$$
It implies $(\delta I-\beta A)BA^t=\big(-\delta A+(\alpha\delta-\beta\gamma)I\big)B.$ Using this, we obtain the following:
\begin{eqnarray*}
(\delta I-\beta A)L_1(B)&=& (\delta I-\beta A)AB+(\delta I-\beta A)BA^t \\
&=& (\delta I-\beta A)AB+ [-\delta AB+(\alpha\delta-\beta\gamma)B] \\
&=& -\beta A^2B+(\alpha\delta-\beta\gamma)B \\
&=& \big(-\beta A^2+(\alpha\delta-\beta\gamma) I\big)B. \\
(\delta I -\beta A)^2BA^tA^t &=& (\delta I-\beta A)\big(-\delta A+(\alpha\delta-\beta\gamma)I\big)BA^t \\
&=& \big(-\delta A+(\alpha\delta-\beta\gamma)I\big)(\delta I-\beta A) BA^t \\
&=& \big(-\delta A+(\alpha\delta-\beta\gamma)I\big)^2B
\end{eqnarray*}
\begin{eqnarray*}
(\delta I-\beta A)^2L_1^2(B)&=& (\delta I-\beta A)^2L_1\big(AB+BA^t\big) \\
&=& (\delta I-\beta A)^2\big(A(AB+BA^t)+(AB+BA^t)A^t\big) \\
&=& (\delta I-\beta A)^2(A^2B+2ABA^t+BA^t A^t) \\
&=& (\delta I-\beta A)^2A^2B+2(\delta I-\beta A)^2A BA^t+(\delta I-\beta A)^2 BA^t A^t \\
&=& (\delta I-\beta A)^2A^2B+2(\delta I-\beta A)A (\delta I-\beta A)BA^t \\ && +(\delta I-\beta A)^2 BA^t A^t \\
&=& (\delta I-\beta A)^2A^2B+2(\delta I-\beta A)A\big(-\delta A +(\alpha\delta-\beta\gamma)\big)B \\
&& + \big(-\delta A+(\alpha\delta-\beta\gamma)I\big)^2B\\
&=& \big( (\delta I-\beta A)A+(-\delta A+ (\alpha\delta-\beta\gamma)I\big)^2B \\
&=& \big(-\beta A^2+(\alpha\delta-\beta\gamma)I\big)^2B.
\end{eqnarray*}
Then,
\begin{eqnarray*}
0 &=& (\delta I-\beta A)^2 \big((L_1-\alpha I)^2+\beta^2 I\big)(B) \\
&=& (\delta I-\beta A)^2L_1^2(B)-2\alpha(\delta I-\beta A)^2L_1(B)+(\delta I-\beta A)^2(\alpha^2+\beta^2)B \\
&=& \big(-\beta A^2+(\alpha\delta-\beta\gamma)I\big)^2B-2\alpha(\delta I-\beta A)\big(-\beta A^2+ (\alpha\delta-\beta\gamma)I\big) B\\
&& +(\delta I-\beta A)^2(\alpha^2+\beta^2)B \\
&=& \big((-A^2+\alpha A-\gamma I)^2+(\delta I-\beta A)^2\big)B.
\end{eqnarray*}
Due to Fact~2, there exist $a,b,c,d\in\mathbb{R}$ such that
$$(-t^2+\alpha t-\gamma)^2+(\delta-\beta t)^2=(t^2-at+b)(t^2-ct+d).$$
Therefore,
\begin{equation}\label{eq2}
(A^2-aA+b I)(A^2-cA+d I)B=0.
\end{equation}

\noindent {\em Case 1A:} Suppose $(A^2-cA+d I)B=0$. As $B\neq 0$, let $x$ be a nonzero column of $B$. Then, $(A^2-cA+d I)x=0$.

If $c^2-4d \geq 0$, then write $\gamma=\big(c+\sqrt{c^2-4d}\,\big)/2$ and $\delta=\big(c-\sqrt{c^2-4d}\,\big)/2.$ Now, $\gamma,\delta\in\mathbb{R}$ and
$$(A-\gamma I)(A-\delta I)x=(A^2-c A+d I)x=0.$$

If $(A-\delta I)x=0$, then $[(A-\delta I)^2+0\cdot I]x=(A-\delta I)[(A-\gamma I)(A-\delta I)x]=0$ shows that $x$ is a true-pair vector of $A$.

If $(A-\delta I)x\neq 0$, then $[(A-\gamma I)^2+0\cdot I](A-\delta I)x=0$ shows that $(A-\delta I)x$ is a true-pair vector of $A$.

If $c^2-4d <0$, then write $\gamma=c/2$ and $\delta=\big(\sqrt{4d-c^2}\,\big)/2.$ Now, $\gamma,\delta\in\mathbb{R}$ and
$$[(A-\gamma I)^2 + \delta^2 I]x=(A^2-c A+d I)x=0.$$
Thus, $x$ is a true-pair vector of $A$.

\vspace{4pt}
\noindent {\em Case 1B:} Suppose $C:=(A^2-cA+d I)B\neq 0$. Equation~\ref{eq2} gives $(A^2-aA+b I)C=0$. This case reduces to Case 1A with $a, b, C$ in place of $c, d, B$, respectively.

\vspace{4pt}
\noindent{\em Case 2:} Suppose  $D:=[\delta(L_1-\alpha I)-\beta(L_2-\gamma I)](B)\neq 0$. Equation~\ref{eq1} yields
$$[\delta(L_1-\alpha I)+\beta(L_2-\gamma I)]D=0.$$
This case is reduced to Case~1 with $-\beta$ in place of $\beta$ and $D$ in place of $B$.
\end{proof}

\section{Main results}
In this section we prove our main result and then derive the existence of the intended factorization of a polynomial wih real coefficients.

\begin{theorem}\label{thm1}
Every linear operator on a finite dimensional nonzero real vector space has a true-pair vector.
\end{theorem}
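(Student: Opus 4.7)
The plan is to prove the theorem by induction on the evenness $m$ of the operator, using Lemma~\ref{lem4} as the engine that propagates the property from operators of evenness $m$ to operators of evenness $m+1$.

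For the base case $m=0$, I would establish that every linear operator on an odd-dimensional real vector space has a true-pair vector. If $T:V\to V$ with $\dim(V)$ odd, then $\det(tI-T)$ is a monic real polynomial of odd degree, so by Fact~1 it has a real zero $\lambda$. Consequently $\det(\lambda I-T)=0$, so $T-\lambda I$ is not injective, and there exists a nonzero $u\in V$ with $(T-\lambda I)u=0$. Then $[(T-\lambda I)^2+0^2 I](u)=0$, showing that $(\lambda,0)$ is a true-pair of $T$ with associated true-pair vector $u$. This base case relies only on Fact~1 and the standard determinant characterization of eigenvalues, avoiding both the fundamental theorem of algebra and the Cayley--Hamilton theorem.

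The inductive step is immediate from Lemma~\ref{lem4}: if every operator of evenness $m$ has a true-pair vector, so does every operator of evenness $m+1$. Hence by induction, every operator of evenness $m$ has a true-pair vector for all $m\in\mathbb{N}\cup\{0\}$. Given an arbitrary linear operator $T$ on a finite dimensional nonzero real vector space $V$, write $\dim(V)=2^m k$ uniquely with $m\geq 0$ and $k$ odd; then $T$ is an operator of evenness $m$, and thus has a true-pair vector. This completes the proof.

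The genuine difficulty has already been absorbed into Lemma~\ref{lem4}, where the symmetric-matrix device (introducing the commuting operators $L_1(X)=AX+XA^t$ and $L_2(X)=AXA^t$ on the $n(n+1)/2$-dimensional space of symmetric matrices, thereby strictly lowering the $2$-adic valuation of the dimension) together with Fact~2 (splitting a certain degree-four real polynomial into two quadratic factors) jointly effect the reduction. Once Lemma~\ref{lem4} is in hand, the theorem itself is only a two-line induction with a base case drawn directly from Fact~1, and I expect no further obstacle at the level of the theorem.
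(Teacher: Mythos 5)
Your proposal is correct and follows essentially the same route as the paper: induction on the evenness, with the base case obtained from Fact~1 applied to the odd-degree characteristic polynomial and the inductive step delegated entirely to Lemma~\ref{lem4}. The only difference is that you spell out the determinant argument for the existence of an eigenvector in the base case, which the paper leaves implicit.
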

\begin{proof}
We use induction on the evenness of a linear operator. Let $V$ be a finite dimensional real vector space. In the basis step, when a linear operator $T:V\to V$ is of evenness $0$, $V$ is an odd dimensional real vector space. The characteristic polynomial of $T$ is of odd degree. By Fact~1, it has a real zero, say, $\lambda$. Then, $\lambda$ is an eigenvalue of $T$ with an associated eigenvector $v$. As remarked earlier, $v$ is a true-pair vector of $T$.

Assume the induction hypothesis that each operator of evenness $m$ has a true-pair vector. Let $T:V\to V$ be a linear operator with evenness $m+1$. By Lemma~\ref{lem4}, $T$ has a true-pair vector.
\end{proof}


As a corollary to Theorem~\ref{thm1}, we obtain the following result about real polynomials.

\begin{theorem}\label{thm2}
Every non-constant real polynomial in a single variable $t$ has either a real linear factor in the form $t-\alpha$ or a real quadratic factor in the form $(t-\beta)^2+\gamma^2$ for real numbers $\alpha,\beta$ and $\gamma$.
\end{theorem}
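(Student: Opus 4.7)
The plan is to reduce the theorem to Theorem~\ref{thm1} via the companion matrix construction (Fact~3), and then extract the desired factor using the division algorithm together with the Cayley--Hamilton theorem. Let $p(t)$ be a non-constant real polynomial. After dividing by the leading coefficient I may assume $p(t)$ is monic of some degree $n \geq 1$. By Fact~3, there is an $n \times n$ real matrix $A$ whose characteristic polynomial ${\rm det}(tI - A)$ equals $p(t)$. I will view $A$ as a linear operator on $\mathbb{R}^{n \times 1}$.

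Next, I would invoke Theorem~\ref{thm1} to produce real numbers $\alpha,\beta$ and a nonzero vector $v \in \mathbb{R}^{n \times 1}$ with $\bigl[(A-\alpha I)^2 + \beta^2 I\bigr](v) = 0$. Performing polynomial division of $p(t)$ by the real quadratic $(t-\alpha)^2 + \beta^2$ yields
\begin{equation*}
p(t) = q(t)\bigl((t-\alpha)^2 + \beta^2\bigr) + r(t), \qquad r(t) = at + b,\ a,b \in \mathbb{R}.
\end{equation*}
By the Cayley--Hamilton theorem, $p(A) = 0$. Substituting $t = A$ and evaluating at $v$ kills the $q(A)$-term, leaving $r(A)(v) = aAv + bv = 0$.

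Now I would split into two cases depending on the remainder. If $a = 0$, then $bv = 0$ forces $b = 0$ (since $v \neq 0$), so $r(t) = 0$ and $p(t) = q(t)\bigl((t-\alpha)^2 + \beta^2\bigr)$. When $\beta \neq 0$, this is already a factor of the required quadratic form (with negative discriminant); when $\beta = 0$, the factor equals $(t-\alpha)^2$, whence $t - \alpha$ is a linear factor. If instead $a \neq 0$, then $Av = \lambda v$ with $\lambda = -b/a$, so $v$ is an eigenvector of $A$. Writing $p(t) = (t-\lambda)s(t) + p(\lambda)$ and evaluating $p(A)v = 0$ gives $s(A)(A-\lambda I)v + p(\lambda)v = p(\lambda)v = 0$, hence $p(\lambda) = 0$ and $t - \lambda$ is a linear factor of $p(t)$.

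The only subtlety I anticipate is keeping the case analysis clean: the output factor depends both on whether the remainder $r(t)$ vanishes and, when it does, on whether $\beta$ is zero or nonzero. Everything else is just division algorithm, Cayley--Hamilton on the companion matrix, and the already-established Theorem~\ref{thm1}. No use of complex numbers or the fundamental theorem of algebra enters at any step, which is the whole point of the construction.
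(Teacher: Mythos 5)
Your proposal is correct and follows essentially the same route as the paper: reduce to the companion matrix via Fact~3, apply Theorem~\ref{thm1} to get a true-pair vector, divide $p(t)$ by the resulting quadratic, and use Cayley--Hamilton to force the linear remainder to vanish or to produce an eigenvalue and hence a real root. Your extra subcase splitting on $\beta=0$ versus $\beta\neq 0$ and the explicit verification that $p(\lambda)=0$ for an eigenvalue $\lambda$ are harmless refinements of steps the paper treats more briefly.
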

\begin{proof}
Without loss in generality, consider monic polynomials with degree at least $3$. So, let
$$p(t)=a_1+a_2 t+a_3 t^2+\cdots + a_nt^{n-1}+t_n,$$
where $n\in\mathbb{N} ,~n\geq 3$ and $a_1,a_2,\ldots,a_n\in\mathbb{R}$. In view of Fact~3, let $A$ be a matrix of order $n$ such that $\det(t I-A)=p(t)$. By Theorem~\ref{thm1}, there exist $\beta,\gamma\in\mathbb{R}$ and a nonzero vector $v\in\mathbb{R}^{n\times 1}$ such that $[(A-\beta I)^2 + \gamma^2 I]v=0$.  Let $r(t)=(t-\beta)^2+\gamma^2$. Now, $r(A)v=0$. By the Division algorithm,

\vspace{6pt}
\centerline{either $~p(t)=q(t)r(t)~$ or $~p(t)=q(t)r(t)+(at-b)~$}

\vspace{6pt}
\noindent for some real polynomial $q(t)$ of degree $n-2$ and some $a,b\in\mathbb{R}$. In the former case, we are through. In the latter case, $p(A)v=q(A)r(A)v+(aA-bI)v$. By Cayley-Hamilton theorem, $p(A)=0$. So, $(aA-bI)v=0$.

If $a=0$, then $bv=0$; and $v\neq 0$ implies $b=0$. Hence, $p(t)=q(t)r(t)$.

If $a\neq 0$, then $b/a$ is an eigenvalue of $A$. Thus, $p(b/a)=0$. Then, $p(t)=(t-b/a)q_1(t)$ for some real polynomial $q_1(t)$ of degree $n-1$.
\end{proof}

Observe that a quadratic factor $(t-\alpha)^2+\beta^2$ of $p(t)$ includes two cases. If $\beta=0$, then the real linear polynomial $t-\alpha$ divides $p(t)$. And, if $\beta\neq 0$, then the discriminant of $(t-\alpha)^2+\beta^2$ is $4\alpha^2-4(\alpha^2+\beta^2)=-4\beta^2<0$; so that a real quadratic polynomial with negative discriminant, namely, $(t-\alpha)^2+\beta^2$ divides $p(t)$. Thus, applying Theorem~\ref{thm2} repeatedly, we obtain the required factorization proving the following statement.

\begin{theorem}\label{thm3}
Every real polynomial $p(t)$ of degree $n\geq 1$ can be factorized as
$$p(t)=a\cdot L_1(t)\cdots L_m(t)\cdot Q_1(t)\cdots Q_k(t)$$
where $a\in\mathbb{R}$, $L_i(t)$ are linear real polynomials and $Q_j(t)$ are quadratic real polynomials with negative discriminant.
\end{theorem}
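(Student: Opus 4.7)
The plan is to prove Theorem~\ref{thm3} by strong induction on the degree $n$ of $p(t)$, invoking Theorem~\ref{thm2} at each step. Since the statement carries an explicit scalar $a$ in front, I would first extract the leading coefficient of $p(t)$ and reduce to the case where $p(t)$ is monic; the coefficient can be reinserted at the end as the constant $a$.

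For the base case $n=1$, a monic degree-one polynomial is already of the form $t-\alpha$, hence is itself a linear factor $L_1(t)$ with no quadratic factors needed.

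For the inductive step, suppose $n\geq 2$ and the statement holds for all real polynomials of smaller degree. I would apply Theorem~\ref{thm2} to $p(t)$, which yields either a linear factor $t-\alpha$ or a factor of the shape $(t-\beta)^2+\gamma^2$. As noted in the paragraph preceding Theorem~\ref{thm3}, the quadratic case splits according to $\gamma$: if $\gamma=0$, then $(t-\beta)^2$ divides $p(t)$, so in particular $t-\beta$ is a linear factor; if $\gamma\neq 0$, then $(t-\beta)^2+\gamma^2$ has discriminant $-4\gamma^2<0$ and serves as one of the $Q_j$. In every scenario, $p(t)$ admits a divisor $f(t)$ of degree $1$ or $2$ of the desired type. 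The division algorithm then gives $p(t)=f(t)q(t)$ with $\deg q=n-\deg f<n$, and the inductive hypothesis factorizes $q(t)$ into the prescribed linear and quadratic pieces. Multiplying back by $f(t)$ closes the induction.

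There is no real obstacle here: all the analytical content — manufacturing a real linear or irreducible-quadratic factor of $p(t)$ using only Facts~1--3 — has already been performed in Theorem~\ref{thm1} (via Lemmas~\ref{lem3} and \ref{lem4}) and packaged as Theorem~\ref{thm2}. What remains is a routine descent on degree. The only point demanding minor care is to record the $\gamma=0$ subcase as furnishing a linear factor rather than a degenerate quadratic, so that each $Q_j$ produced in the final factorization genuinely has negative discriminant; but this bookkeeping is exactly what the observation preceding Theorem~\ref{thm3} already sets up.
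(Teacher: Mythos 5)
Your proposal is correct and matches the paper's own argument: the paper likewise obtains the factorization by ``applying Theorem~\ref{thm2} repeatedly,'' with the same observation that a degenerate quadratic factor $(t-\beta)^2+0^2$ should be recorded as the linear factor $t-\beta$ so that every $Q_j$ has strictly negative discriminant. Your version merely makes the implicit induction on degree (and the extraction of the leading coefficient $a$) explicit, which is a harmless formalization of the same route.
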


Notice that the factorization in Theorem~\ref{thm3} is unique since $\mathbb{R}[t]$ is a unique factorization domain. Further, in Theorem~\ref{thm3}, $n=m+2k$ for $n,m,k\in\mathbb{N}\cup\{0\}$.

\section{Conclusions}
As shown in Section~1, use of complex numbers, the Fundamental theorem of algebra  and Cayley-Hamilton theorem lead to a simple proof of Theorem~\ref{thm3}. However, the statement of this result does not involve complex numbers. Thus, one would expect to obtain a proof which does not use complex numbers. This problem has been mentioned in \cite{srini} as the open problem 2. We have constructed such a proof by using Linear algebra. During this construction we have introduced the notions of a true-pair vector and the evenness of a linear operator on finite dimensional real vector spaces. These notions served as abbreviations only; they helped us to present the proof in a comprehensible manner. We remark that the results discussed here neither use nor prove the fundamental theorem of algebra. It is pertinent to note that perhaps the author in \cite{srini} is asking for a proof of the factorization of real polynomials which uses the techniques of real analysis. In that sense, the problem is still open.

%
%
%

\end{document}